\documentclass[12pt]{amsart}
\usepackage{graphicx}
\usepackage{amsfonts}
\usepackage{amssymb}
\usepackage{latexsym}
\usepackage{amscd}

\usepackage{color, soul}
\definecolor{gr}{rgb}{0.7, 1, 0.7}
\definecolor{rr}{rgb}{1, 0.7, 0.7}

\newtheorem{thm}{Theorem}[section]

\newtheorem{prop}[thm]{Proposition}

\newenvironment{pf*}[1]{\proof[#1]}{\endproof}
\usepackage{euscript}

\usepackage[OT2,OT1]{fontenc}

\newcommand{\cal}[1]{{\mathcal #1}}

\theoremstyle{definition}
\newtheorem{defn}{Definition}[section]

\theoremstyle{remark}
\newtheorem{rem}{Remark}[section]

\newcommand{\wtl}{\widetilde}
\newcommand{\eps}{\epsilon}

\newcommand{\aaa}[1]{{{\mathbf{#1}}}}

\numberwithin{equation}{section}

\newcommand{\cA}{{\cal A}}

\newcommand{\cU}{{\mathcal U}}

\newcommand{\cB}{{\aaa B}}

\newcommand{\bT}{{\mathbf T}}

\newcommand{\cP}{{\cal P}}

\newcommand{\cAC}{{\cal A\cal C}}

\newcommand{\cR}{{\cal R}}

\newcommand{\CC}{{\Bbb C}}

\newcommand{\RR}{{\Bbb R}}
\newcommand{\TT}{{\Bbb T}}
\newcommand{\ZZ}{{\Bbb Z}}
\newcommand{\NN}{{\Bbb N}}

\newcommand{\QQ}{{\Bbb Q}}

\begin{document}
\addtolength{\evensidemargin}{-0.7in}
\addtolength{\oddsidemargin}{-0.7in}

\title[Maps with multiple breaks]{Hyperbolicity of renormalization for maps with multiple breaks}
\author{N. Goncharuk, I. Gorbovickis, M. Yampolsky}
\date{\today}

\maketitle

\begin{abstract}
We construct hyperbolic horseshoes for piecewise-analytic  homeomorphisms of the circle with {\it multiple} break-type singularities, under the assumptions of bounded type rotation numbers and bounded geometry -- provided the sizes of the breaks are uniformly small. As a consequence, we obtain a $C^{1+\alpha}$-rigidity result for such maps and prove that rigidity classes are analytic submanifolds.
  \end{abstract}

\section{Introduction}
The subject of this paper are piecewise-smooth homeomorphisms of the circle $\TT=\RR/\ZZ$ whose singularities are of a {\it break} type -- that is, one-sided derivatives exist, are non-zero, but do not match. The magnitude of the break singularity of $f$ at $c\in\TT$ is conveniently estimated as $f'(c+)/f'(c-)$. There is an extensive literature on such maps, primarily in the works of K.~Khanin with co-authors (see e.g. \cite{VK,KhKh,KhanYam}). As is classical from the time of Denjoy \cite{Denjoy}, the principal question in the study of such maps is {\it rigidity}. That is, assuming two maps with irrational rotation numbers are combinatorially equivalent -- which in this case means that there is a topological conjugacy $\phi$ between them which maps breaks to breaks with the same size -- will $\phi$ have to be smooth (and how smooth)?

The modern approach to proving rigidity of circle maps is via the study of their renormalizations. Homeomorphisms with irrational rotation numbers can be renormalized infinitely many times, and the smoothness of the conjugacy between two maps is related to the rate at which the distance between their renormalizations descreases.
Renormalizations of maps with break-type singularities have particuarly nice properties. Specifically, independently of the number of breaks or their size, the sequence of renormalizations of a $C^{2+\text{H\"older}}$ map with breaks with an irrational rotation number converges in $C^1$ metric at a geometric rate to the finite-dimensional space of piecewise M\"obius maps with the given number of breaks. In full generality, this result can be found in \cite{GhKh}, with the estimates carried out in \cite{KhTepl}.
This is in a sharp contrast with maps with critical points for which the corresponding space is infinite-dimensional, leading to a much more challenging renormalization and rigidity theory (see e.g. \cite{GorYa2} and references therein).

For maps with a single break singularity, the renormalization theory is largely completed. In particular, renormalization hyperbolicity on the finite-dimensional space of piecewise-M\"obius maps was established by Khanin and one of the authors in \cite{KhanYam}; and infinite-dimensional treatment on the space of general analytic maps is going to be addressed in our forthcoming work \cite{GonGorYam}. In contrast, the problem has been wide open for maps with more than one break.

In this paper, we construct a complete renormalization picture -- with a hyperbolic horseshoe of renormalization -- for analytic maps with any number of breaks, and prove the corresponding rigidity result for $C^{2+\text{H\"older}}$-smooth maps under two additional conditions. The first one is that the rotation numbers have to be of bounded type. This is a natural restriction when studying renormalization of maps with singularities as it ensures that the renormalization horseshoe is closed. The extension of the renormalization horseshoe picture to analytic maps with general irrational rotation numbers even in the case of a single break is highly non-trivial and will be carried out in our forthcoming work \cite{GonGorYam}. The second condition is that the break sizes have to be uniformly small. The reason for this condition is a perturbative argument in our proof. It is likely that renormalization hyperbolicity remains true for maps with large breaks as well.

To highlight the main ideas of the argument and to keep the presentation clear,  we have tried to keep the exposition streamlined -- so this paper is not meant to serve as an introduction to the subject. As is standard, renormalization of circle maps is presented in the language of pairs of maps which can be seen as non-linear interval exchange transformations of two intervals. Our framework starts with renormalization horseshoe -- rigid rotations -- constructed for analytic maps without breaks in the work \cite{GY} of two of the authors and then shows that it survives under certain types of perturbations. Thus it fits within the philosophy of KAM theory, showing that conjugacy classes of ``nice'' rotations survive under suitable small perturbations within finite codimension submanifolds. In fact, we prove a sharp KAM result showing that rigidity classes are global submanifolds whose codimension corresponds to the number of breaks.

The survival of a renormalization horseshoe  has been used previously in the works of subsets of the authors, see \cite{GorYa} and \cite{KAM-yam}, however, the synthesis in the present paper is novel. The key point, which makes our whole approach possible, is that in our treatment renormalization becomes  a differentiable -- in fact, analytic -- dynamical system on an infinite dimensional Banach manifold of pairs of piecewise-analytic maps. This permits us to use the toolbox of hyperbolic dynamics to study its properties.
Although seen previously for maps of the circle (see e.g. our works \cite{Ya3,GorYa,GY,KAM-yam}), such a construction is new in the setting of maps with breaks, and 
will be useful in the study of other open questions, including in our upcoming work \cite{GonGorYam}.

\section{Almost commuting pairs}
For a point $z_0\in\CC$ and a real number $r>0$, denote $U_r(z_0)=\{z\in\CC\colon |z-z_0|<r\}$; for a set $S\subset \CC$, let $$U_r(S)=\underset{z\in S}{\cup} U_r(z).$$

For $M\in\NN$, let $\bT_M$ denote the irrational numbers of type bounded by $M$ (that is, $M$ is an upper bound for the terms in the continued fraction expansion of the irrational number).

Let us set
$$T_\theta(z)=z+\theta,\text{ and }\alpha(z)=T_{-1}(z)=z-1.$$
Let us fix a real-symmetric open topological disk $W\subset\CC$ with $W\supset [-1,0]$ and denote by $\cA_W$ the Banach space of analytic maps $f\colon W\to\CC$ continuous up to the boundary of $W$, with the uniform norm.

We say that a pair of maps $\zeta=(\eta,\xi)$ is {\it an almost commuting pair} if
\begin{enumerate}
\item $\eta$, $\xi$ are bounded real-symmetric analytic maps in $W$, continuous up to the boundary;
\item $\zeta$ is a non-linear interval exchange transformation:
  \begin{itemize}
  \item $W\supset [\eta(0),\xi(0)]\ni 0$;
    \item $\eta(0)=-1$;
  \item   $\eta$, $\xi$ have diffeomorphic restrictions to $[0,\xi(0)]$, $[\eta(0),0]$ respectively;
    \item $\eta\circ\xi(0)=\xi\circ\eta(0)\in[\eta(0),0].$
  \end{itemize}
\item $[\zeta](z)=\eta\circ \xi(z)-\xi\circ\eta(z)=o(z)$ as $z\to 0$.
\end{enumerate}

We denote the space of such pairs of maps $\zeta$ by $\cAC_W$; as seen in \cite{GaYa} it is a Banach submanifold of the space of pairs of bounded analytic functions in $W$, continuous up to the boundary. Gluing a real neighborhood of $0$ with a real neighborhood of $\eta(0)$ via $\eta$, we obtain a smooth manifold $M$ homeomorphic to the circle $\TT=\RR/\ZZ$. The map $\xi$ induces a well-defined $C^1$-smooth map $f_\zeta$ on $M$. The rotation number of the pair $\zeta$ is given by
$$\rho(\zeta)\equiv \rho(f_\zeta).$$

Renormalization $\cR$ is defined in the usual way. Namely, if $\rho(f_\zeta)\neq 0$, set
$$\chi=\chi(\zeta)=[1/\rho(\zeta)].$$
The {\it pre-renormalization } of $\zeta$ is the pair
$$p\cR(\zeta)=(\xi,\xi^\chi\circ \eta)$$
and the renormalization is the rescaling
$$\cR(\zeta)=(\lambda\circ\xi\circ\lambda^{-1},\lambda\circ\xi^\chi\circ \eta\circ\lambda^{-1})$$
by the homothety $\lambda(x)=-x/\xi(0)$.
Renormalization acts as the Gauss map $G(x)=\{1/x\}$ on rotation numbers.

For $M\in\NN$ we set $$\Lambda_M=\{(\alpha,T_\theta)\mid \theta\in\bT_M\}.$$
This set is evidently invariant under $\cR$.

If an almost commuting pair $\zeta=(\eta,\xi)$ satisfies the commutation condition
$$\eta\circ\xi=\xi\circ\eta$$
where defined, then it is called a {\it commuting pair}. Note that in this case the map $f_\zeta$ obtained as above is analytic.

The renormalization of an almost commuting pair is an almost commuting pair and the renormalization of a commuting pair is again commuting.
Since the
commutation condition has infinite order, it does not readily translate into Banach submanifold structure on for the set of commuting pairs, this is a known difficulty which the almost commutation condition allows one to finesse.

Renormalization hyperbolicity results of \cite{GY} are formulated in the Banach space of analytic maps defined on a neighborhood of the circle using the definition of a {\it cylinder renormalization}. An equivalent formulation would be to consider {\it normalized commuting pairs}
$\zeta=(\alpha,\xi)$ \cite{KAM-yam}. The definition of renormalization would then need to include a canonical choice of a uniformizing conformal coordinate change to turn the first map of the renormalized pair into the unit translation. This is done in \cite{GY} (in much more generality than for bounded-type rotation numbers) and in this setting the set $\Lambda_M$ is shown to be hyperbolic with one-dimensional expanding direction. Moreover, its stable manifolds coincide with conformal conjugacy classes of elements of $\Lambda_M$. We refer the reader to \cite{GY} for further details.

The renormalization of an almost commuting pair is again an almost commuting pair, and 
as seen in \cite{KAM-yam}, renormalization is {\it commutativity improving}. We quote \cite[Theorem 2.2]{KAM-yam}:
\begin{thm}
  \label{th:commutator1}
  Fix  $\tau\in(0,1)$, and let $M\in\NN$. There exist  $\ell\in\NN$ and  $\eps>0$
    such that the following is true. 
  Let $\zeta=(\eta,\xi)\in\cAC_W$ be $\eps$-close to $\Lambda_M$ and $\rho(\zeta)\in \bT_M$. Then
  \begin{equation}
    \label{commutator-estimate1}
    \|[\cR^\ell \zeta]\|^\infty_{U_\delta(0)}<\tau\|[\zeta]\|^\infty_{U_\delta(0)}.
    \end{equation}
\end{thm}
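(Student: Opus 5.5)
The plan is to express the commutator of the renormalized pair through the commutator of the original pair, and to show that the linear part of this relation is a strong contraction near $\Lambda_M$.

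First, write $\cR^\ell\zeta=\Lambda_\ell\circ(\eta_\ell,\xi_\ell)\circ\Lambda_\ell^{-1}$, where $(\eta_\ell,\xi_\ell)=p\cR^\ell\zeta$. Here $\eta_\ell,\xi_\ell$ are compositions of $\eta$ and $\xi$ along the continued fraction of $\rho(\zeta)$, and $\Lambda_\ell$ is the total rescaling, a homothety with factor of order $|I_\ell|^{-1}$. Near $\Lambda_M$ we have $|I_\ell|\asymp\theta_0\theta_1\cdots\theta_{\ell-1}$ with $\theta_i=G^i\rho$. For bounded type this is bounded above by $C\mu^\ell$ with $\mu<1$ depending only on $M$: any two consecutive $\theta_i$ have product at most $1/2$, and $\theta_i\ge 1/(M+1)$.

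Since $\eta_\ell$ and $\xi_\ell$ are words in $\eta,\xi$, the commutator $\eta_\ell\circ\xi_\ell-\xi_\ell\circ\eta_\ell$ is obtained by reordering letters. Each transposition $\eta\xi\to\xi\eta$ inside a word contributes $[\zeta]$ evaluated at some intermediate point, multiplied by the derivative of the remaining tail of the word. The intermediate points lie in a neighborhood of the dynamical interval, and one must check they lie in $U_\delta(0)$. After rescaling, these points correspond to the orbit of $\Lambda_\ell^{-1}(U_\delta(0))$, a disk of radius about $\delta|I_\ell|$, so for small $\delta$ they stay in a tiny neighborhood of $0$ (respectively of $\eta(0)$, handled by the almost-commuting structure). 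The tail derivatives are bounded by $C$, since for $\zeta$ $\eps$-close to a rotation pair all compositions are close to translations.

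The key gain comes from the condition $[\zeta](z)=o(z)$, which in fact gives vanishing to second order at $0$ in the Banach-manifold sense of \cite{GaYa}. By Schwarz-type estimates on $U_\delta(0)$, we have $|[\zeta](w)|\le (|w|/\delta)^2\|[\zeta]\|_{U_\delta(0)}$. The argument $w$ has size $O(\delta|I_\ell|)$, giving a factor $|I_\ell|^2$. Multiplying by the rescaling factor $|I_\ell|^{-1}$ and by the number of transpositions, which is at most $(M+1)^{2\ell}$, yields
\[
\|[\cR^\ell\zeta]\|_{U_\delta(0)}\le C(M+1)^{2\ell}\,|I_\ell|\,\|[\zeta]\|_{U_\delta(0)}\cdot(1+O(\eps)).
\]

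This count is too crude, because $(M+1)^{2\ell}\mu^\ell$ need not be small. So the main obstacle is doing the count carefully. The approach I would try first is to linearize at $\Lambda_M$. For a pair of translations the derivative of the commutator map is a linear operator on germs vanishing to second order. After rescaling by $\Lambda_\ell$, a quadratic germ picks up $|I_\ell|^{2-1}$, while the transposition sum is bounded by the word length $\sim q_\ell$. Since $q_\ell|I_\ell|\asymp 1$, one gains only through the extra factor $|I_\ell|$ from second-order vanishing. That gives $\|D\|\le C q_\ell|I_\ell|^2\lesssim C|I_\ell|\le C\mu^\ell$.

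Then choose $\ell$ so that $C\mu^\ell<\tau/2$, and choose $\eps$ small so that the nonlinear error, which is $O(\eps)$ relative to the linear part by analytic dependence, keeps the total below $\tau$.
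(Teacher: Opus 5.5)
Your argument has a genuine gap where you bound the transposition terms.

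You assert that the points at which $[\zeta]$ gets evaluated stay in a tiny neighborhood of $0$ (or of $\eta(0)$). This is false. For $z$ near $0$, the inner parts of the words $\eta_\ell,\xi_\ell$ move $z$ along its orbit, and these points are spread over the whole interval $[\eta(0),\xi(0)]$. Already for $\ell=1$ and a pair close to $(\alpha,T_\theta)$, one has $\xi^k\circ\eta(z)\approx z-1+k\theta$ for $0\le k\le\chi$. At such points $[\zeta]$ has no second-order vanishing, since the almost-commuting condition only concerns $z\to 0$. Moreover, its size there is not controlled by $\|[\zeta]\|^\infty_{U_\delta(0)}$, which is the only quantity allowed on the right-hand side.

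Even granting your localization claim, the count does not close. You have $\sim q_\ell$ terms, each of size $|I_\ell|^2\|[\zeta]\|^\infty_{U_\delta(0)}$, times the rescaling factor $|I_\ell|^{-1}$. That gives $q_\ell|I_\ell|\asymp 1$, not $q_\ell|I_\ell|^2$. In your last estimate the factor $|I_\ell|$ from quadratic vanishing is counted twice, since it is already in $|I_\ell|^{2-1}$. So no transposition sum whose length grows with $\ell$ can give a contraction.

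The missing idea is that no sum is needed at all. Since $p\cR^{\ell+1}\zeta=(\xi_\ell,\xi_\ell^{\chi_\ell}\circ\eta_\ell)$, one has
\[
\eta_{\ell+1}\circ\xi_{\ell+1}=\xi_\ell^{\chi_\ell}\circ(\xi_\ell\circ\eta_\ell),\qquad \xi_{\ell+1}\circ\eta_{\ell+1}=\xi_\ell^{\chi_\ell}\circ(\eta_\ell\circ\xi_\ell).
\]
By induction, $\eta_\ell\circ\xi_\ell=f_\ell\circ\eta\circ\xi$ and $\xi_\ell\circ\eta_\ell=f_\ell\circ\xi\circ\eta$ for one and the same composition $f_\ell$, up to interchanging the two (i.e.\ up to sign). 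Hence
\[
[p\cR^\ell\zeta]=\pm(f_\ell\circ\eta\circ\xi-f_\ell\circ\xi\circ\eta)\approx\pm f'_\ell\,[\zeta].
\]
This is a single transposition, performed at the point $w$ itself. Here $w\in U_{\delta\lambda_\ell}(0)$, where $\lambda_\ell\asymp|I_\ell|$ is the rescaling length.

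Now the estimate closes:
\begin{itemize}
\item quadratic vanishing of $[\zeta]$ at $0$ gives a factor $\lambda_\ell^2$;
\item the rescaling gives $\lambda_\ell^{-1}$;
\item $f'_\ell\approx 1$ once $\ell$ is fixed and $\eps$ is small.
\end{itemize}
So $\|[\cR^\ell\zeta]\|^\infty_{U_\delta(0)}\lesssim\lambda_\ell\|[\zeta]\|^\infty_{U_\delta(0)}<\tau\|[\zeta]\|^\infty_{U_\delta(0)}$ for $\ell$ large. This is the paper's proof. Your bound $|I_\ell|\le C\mu^\ell$ for bounded type, the Schwarz-type estimate, and the order of choosing $\ell$ and then $\eps$ all carry over unchanged.
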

\begin{proof}
  Easy induction shows (cf. \cite[Lemma 2.3]{KAM-yam}) that, denoting by
$p\cR^\ell\zeta=(\eta_\ell,\xi_\ell)$ the $\ell$-th pre-renormalization of $\zeta$, we have
\begin{equation}
  \label{eq-comm-comp}
  \eta_\ell\circ\xi_\ell=f_\ell\circ\eta\circ\xi\text{ and }\xi_\ell\circ\eta_\ell=f_\ell\circ\xi\circ\eta,
  \end{equation}
where $f_\ell$ is  the same composition of iterates of $\eta$ and $\xi$. In particular, the commutator $[p\cR^\ell\zeta]$ has the form
\begin{equation}
  \label{eq-comm}
  [p\cR^\ell\zeta]=f_\ell\circ\eta\circ\xi-f_\ell\circ\xi\circ\eta.
\end{equation}
  In view of the above, we have the following first-order estimate:
  $$[p\cR^\ell\zeta]=\eta_\ell\circ\xi_\ell-\xi_\ell\circ\eta_\ell\sim f_\ell'(\eta\circ\xi(0))[\zeta].$$
Set $\lambda_\ell=|\xi_\ell(0)|$.
 Select $\ell$ large enough, so that $\lambda_\ell \ll \tau$.   Assuming the pair is sufficiently close to $\Lambda_M$, the derivative $f_\ell'$ is close to 1. Since $[\zeta] \sim k z^2$ at zero with $|k|<c \|[\zeta]\|^{\infty}_{U_\delta(0)}$, we get
  $$|[\cR^\ell\zeta](z)|< \lambda_\ell |f_\ell'(\eta\circ\xi(0))|\cdot c \|[\zeta]\|^{\infty}_{U_\delta(0)} < \tau  \|[\zeta]\|^{\infty}_{U_\delta(0)},$$
and the statement follows.
\end{proof}

The renormalization hyperbolicity result of \cite{GY} naturally translates into the setting of almost commuting pairs \cite{KAM-yam} (we only quote it for maps of bounded type, as this is the setting we will be working in). 

\begin{thm}
  \label{th:hyperb1}
Let $M\in\NN$.
There exists a domain $W$ %and an even natural number $N$
such that the following holds. Renormalization operator $\cR^2$
is an analytic operator from an open neighborhood $\wtl\cU\Supset \Lambda_M$ in $\cAC_W$ to a compact subset of $\cAC_W$. Its differential at every point is a compact linear operator. The invariant set $\Lambda_M$  is uniformly hyperbolic under $\cR^2$, with one-dimensional unstable direction.

%It has a codimension one strong stable foliation by analytic submanifolds.

%The convergence to $\Lambda_M$ is global: let $\zeta$ be an almost commuting pair with $\rho(\zeta)\in \bT_M$. Then
%$$\cR^k(\zeta)\to\Lambda_M$$
%in the locally uniform sense.
  \end{thm}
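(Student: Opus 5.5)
The plan is to transfer the hyperbolicity of cylinder renormalization from \cite{GY} to $\cAC_W$. The almost commuting pairs near $\Lambda_M$ will be split into a commuting part and a transverse, commutator-controlled part, in the spirit of \cite{KAM-yam}.

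\emph{Analyticity and compactness.} For $\zeta$ in a small neighborhood $\wtl\cU$ of the compact set $\Lambda_M$, the continuous functional $\rho$ need not be constant. However, $\chi(\zeta)$ is locally constant on a neighborhood of each point of $\Lambda_M$, because the rotation numbers in $\bT_M$ stay away from the reciprocals of integers. On each such piece, $\cR^2$ is a finite composition of the maps $\eta,\xi$ together with rescaling by $\xi(0)$, so it is analytic. I will choose $W$ so that the compositions $\xi^{\chi}\circ\eta$ (and the corresponding second-level compositions) are defined on a neighborhood $W'\Supset W$ after rescaling. For the rotations $(\alpha,T_\theta)$ this is an explicit check; it then persists for nearby pairs. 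Hence $\cR^2$ factors as an analytic map into $\cA_{W'}\times\cA_{W'}$ followed by the restriction to $W$. By Montel's theorem the restriction operator is compact, so the image is precompact and the differential is compact. Note that $\Lambda_M$ is $\cR^2$-invariant.

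\emph{Splitting.} Near $\Lambda_M$, the commuting pairs with $\eta$ conjugated to $\alpha$ correspond to normalized commuting pairs, i.e.\ to the cylinder setting of \cite{GY}. In the normalized setting, cylinder renormalization is hyperbolic on $\Lambda_M$ with a one-dimensional unstable direction, and its stable manifolds are the conformal conjugacy classes. The remaining directions of $T\cAC_W$ are those of a conformal change of coordinates, which is trivial dynamically, plus the directions that change the commutator $[\zeta]$.

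\emph{Contraction in the transverse directions.} For the change-of-coordinate directions, the renormalizations of conjugate pairs differ by conjugation by a rescaled coordinate change. The rescalings have exponentially small factors, so after renormalization these coordinate changes tend to the identity; hence these directions are contracted. For the commutator directions, I will use Theorem \ref{th:commutator1}: the linearization of $\zeta\mapsto[\cR^\ell\zeta]$ contracts by $\tau<1$ uniformly near $\Lambda_M$. Combining these gives an invariant cone field: a cone around the \cite{GY} unstable direction is mapped into itself and expanded, and the complementary cone is contracted under an iterate $\cR^{2k}$. The standard cone criterion then yields uniform hyperbolicity, with an adapted metric fixing the iterate issue.

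\emph{Main obstacle.} I expect the hardest step to be making the identification of commuting pairs with the cylinder setting of \cite{GY} precise at the level of tangent spaces. Concretely, this means writing $T_\zeta\cAC_W$ as the sum of the tangent space of the commuting pairs (via the uniformizing coordinate that turns $\eta$ into $\alpha$), the coordinate-change directions, and the commutator directions, and checking that the off-diagonal terms of $D\cR^2$ in this decomposition are bounded. I would first construct the uniformizing coordinate analytically in $\zeta$, by solving for the conformal map that glues $\eta$ to a translation, and then read off the block-triangular form of $D\cR^2$.
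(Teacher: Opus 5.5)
\textbf{Verdict: there is a genuine gap, in the commutator directions.}

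Your analyticity and compactness step agrees with the paper, which gets compactness from the domain gain of $\cR$ near translations. Your contraction of coordinate-change directions is the linear counterpart of the paper's observation that rescaled restrictions $\Phi_n$ of a linearizing map tend to the identity geometrically.

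The problem is this. Theorem~\ref{th:commutator1} only contracts the seminorm $\|[\cdot]\|^\infty_{U_\delta(0)}$. It says nothing about the size, in the norm of $\cAC_W$, of the component of a vector transverse to the commuting pairs. To pass from one to the other you need a reverse inequality: the commutator must control the distance to the commuting pairs. Your plan does not supply this, and it does not come for free from choosing a complement.

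At $(\alpha,T_\theta)$ the linearized commutator is $L(u,w)=(w-w\circ\alpha)-(u-u\circ T_\theta)$, restricted to $U_\delta(0)$. This is a restriction to a small disk of functions analytic on a larger neighborhood of $0$. Hence $L$ is compact with infinite-dimensional range, and that range is not closed. So there is no closed complement $E$ of $\ker L$ on which $\|Lv\|^\infty_{U_\delta(0)}\asymp\|v\|$; if there were, $L(E)=L(T)$ would be closed. Bounding the off-diagonal terms, which you name as the main obstacle, does not address this, because the missing estimate is a lower bound, not an upper one. Also, Theorem~\ref{th:commutator1} assumes $\rho(\zeta)\in\bT_M$, which is not an open condition. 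To linearize it you must go back to its proof with fixed combinatorics.

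\textbf{How the paper handles it.} The paper supplies the missing ingredient nonlinearly. A quasiconformal surgery argument \cite[Theorem 7.9]{GorYa2} produces a commuting pair with the same rotation number within $C\|[\zeta]\|^\infty_{U_\delta(0)}$ of $\zeta$. Combined with Theorem~\ref{th:commutator1}, this reduces everything to commuting pairs. No splitting of $T\cAC_W$ is needed, and no Banach manifold structure on commuting pairs, which the paper notes is not readily available.

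For commuting pairs, the paper uses Yoccoz's linearization theorem rather than hyperbolicity of cylinder renormalization. The pair $\zeta$ is conformally conjugate to $(\alpha,T_\theta)$, and $\cR^n\zeta$ is conjugate to $(\alpha,T_{\theta_n})$ by rescaled restrictions $\Phi_n\to\mathrm{id}$. The unstable manifold is the explicit line $\{(\alpha,T_\theta)\colon\theta\in\CC\}$.

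\textbf{How to repair your route.} To keep your linear approach you need the linear analogue of that surgery. Glue via $\eta$ to obtain a cylinder, then solve the additive gluing ($\bar\partial$, Cousin-type) problem on it. This should give $\mathrm{dist}(v,\ker L)\le C\|Lv\|^\infty_{U_\delta(0)}$, measured on a smaller domain. The domain loss is then absorbed by the domain gain of $\cR^2$. Only with this estimate does Theorem~\ref{th:commutator1} yield norm contraction on the transverse part, and only then does your cone argument close.
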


%%  TODO I think it is OK to use the second iterate of renormalization here.

  \begin{proof}
 %% \emph{Comment: We decided to work with differentials only, without constructing stable manifolds, till we get to the end --- and use Hadamard-Perron there. }

The compactness statement follows since $\mathcal R$ increases domain of definition of the maps for pairs that are close to translations.
The complex line $\{(\alpha,T_\theta)\mid \theta\in\CC\}$ is an invariant unstable manifold for all of $\Lambda_M$, since the action of renormalization is the second iterate of the Gauss map $\theta\to 1/\theta$.

A straightforward quasiconformal surgery argument (see \cite[Theorem 7.9]{GorYa2}) shows that the distance from an almost commuting pair $\zeta$ to a commuting pair with the same rotation number is bounded by $C\|[\zeta]\|^\infty_{U_\delta(0)}$ with $C=C(W)$. In view of Theorem~\ref{th:commutator1}, we only need to show that on the set of commuting pairs with rotation numbers in $\bT_M$, there is an exponentially fast convergence to $\Lambda_M$.

Let $\zeta=(\eta,\xi)$ be a commuting pair with $\theta=\rho(\zeta)\in \bT_M$ that is close to $\Lambda_M$. For commuting pairs, the map $f_\zeta$ constructed above is an analytic circle diffeomorphism. By Yoccoz's theorem,
the map $f_\zeta$ is conformally conjugate to the rigid rotation by the angle $\theta$. It follows that $\zeta$ is conjugate to the pair $(\alpha,T_\theta)$ via a conformal map $\Phi$ defined in a neighborhood of $[\eta(0), 0]$.  Setting $\theta_n=\rho(\cR^n\zeta)$ (which is simply the orbit of $\theta$ under the Gauss map), we see that $\cR^n\zeta$ is conjugate to $(\alpha,T_{\theta_n})$ via a conformal conjugacy $\Phi_n$. Here $\Phi_n$ is a rescaled restriction of $\Phi$, hence $\Phi_n$ uniformly converges to the identity map at a geometric rate.

  \end{proof}

\section{Enriched almost commuting pairs}
Fix a nonnegative integer $K$. An {\it enrichment} of an almost commuting pair  $\zeta$ is a list
$$\bar e=(s_1, s_2, \cdots, s_K)$$
of $K$ marked points in $[\eta(0),\xi(0)]$ in a non-decreasing order.
We denote an enriched pair as above by $\zeta^{\bar e} = (\zeta, \bar e)$.
We impose the manifold structure of $\cAC_W\times \RR^K$ on the space of enriched pairs.
The action of renormalization is extended to the space of enriched pairs in the following way. The pre-renormalization is
$$p\cR(\zeta^{\bar e})=p\cR(\zeta)^{\bar a},$$
where $\bar a=(t_1,t_2,\ldots,t_K)$, and  $t_j\in[\eta\circ\xi^{\chi}(0),\xi(0)]$ is the point whose orbit under the first return map to  this interval contains $s_j$. The point $t_j$ is determined uniquely, unless the choice is between one of the two endpoints -- in which case we will place it at $\xi(0)$.
We then set
$$\cR(\zeta^{\bar e})=\cR(\zeta)^{\bar e'},\text{ with }\bar e'=\tau(-\bar a/\xi(0))$$
where $\tau$ is a substitution that reorders the list in a non-decreasing order. Note that $\tau$ is locally constant in $\zeta$ whenever the points $t_j$ are different.

We remark, that renormalization has discontinuities when marked points coincide with an endpoint of the new interval of definition, since under a small perturbation the marked point can jump either to the other endpoint or to the origin. Thus, there are several different analytic branches of the renormalization operator near such enriched pairs. See Section~\ref{sec:general}, where we finesse this explicitly, for further details.

Let us denote $\Lambda_M^K\subset \cAC_W\times\RR^K$ the thus enriched attractor $\Lambda_M$. It is clear that the action of the differential
$D\cR$ at $(\alpha,T_\theta)\in\Lambda_M$ expands the differential of each enriching coordinate by the multiple $-1/\theta$. Hence
$\Lambda_M^K$ is a hyperbolic set in the space of enriched pairs with $K+1$ unstable directions.

\section{Maps with higher order breaks}
We now extend our space once again. Let us say that two real-symmetric analytic maps $f$ and $g$ have a {\it break of order} $r\geq 0$ at a point $x_0\in\RR$ at which both of them are defined if $f(x_0+h)-g(x_0+h) = O(h^r).$

%For an ordered set of points $\bar x =(x_1, \dots, x_n)$, let $i(x)$ be the same set of point re-ordered in a non-decreasing order.

\begin{defn}\label{defn-miltibreak}
  Fixing a domain $W\Supset [-1,0]$ we say that a tuple $\zeta=(\{f_j\}_{j=1}^{K+2}, \{s_j\}_{j=0}^{K+2})$ is a {\it piecewise-analytic almost commuting pair with breaks of order $r\geq 1$} if the following properties hold:
  \begin{enumerate}
  \item Maps $f_j$ are analytic in $W$, continuous up to the boundary, univalent, and real-symmetric;
  \item The list of real points $\{s_j\}\subset W$ includes $s_l=0$  and satisfies the following.
  \begin{itemize}
    \item $f_j([s_{j-1}, s_{j}])\subset W$;
    \item The pair $f_j$, $f_{j+1}$ has a break of order $r$ at $s_{j}$ for $j\neq l$;
    \item The maps $f_{l}$ and $f_{l+1}$ satisfy $f_l(0)=s_{0}$, $f_{l+1}(0)=s_{K+2}$,  and $[f_l, f_{l-1}] = O(z^r)$.

    \end{itemize}

  \end{enumerate}

\end{defn}
Note that the only interesting case will be the case when $f_l(0)=s_0\le s_1\le \dots \le s_{K+1}\le f_{l-1}(0)=s_{K+2}$, in which case the tuple defines an almost commuting pair of continuous mappings $\zeta=(\eta, \xi)$ that equals $f_j$ on $[s_{j-1}, s_{j}]$. However,  we do not require this in the definition to allow perturbations.

We denote the space of such pairs viewed as a subset of $\cA_W^{K+2}\times \RR^{K+2}$
with the induced Banach submanifold  structure by
$\cP_rA_W^K$.
%We say that, in the above notation, there is no break at $s_{j}\neq 0$ if $f_j\equiv f_{j+1}$.
For each fixed $r$,
the space of enriched pairs $\cA_W\times \RR^K$ naturally embeds into $\cP_rA_W^K$, each pair corresponding to a tuple of the form $(\xi, \dots, \xi, \eta,\dots,\eta), (\eta(0), s_1, \dots, 0, \dots,  s_K, \xi(0))$.

Renormalization naturally extends to a neighborhood of any $\zeta^{e} \in \cAC_W\times\RR^K $ in $\cP_rA_W^K$ with non-coincident marked points of $\zeta^e$ and $\cR(\zeta^e)$. Namely, renormalization of $\zeta^e$ can be viewed as a tuple of rescaled compositions of the maps $\zeta|_{[s_k, s_{k+1}]}$. Marked points $\tilde s_j$ of the renormalized map are preimages of $s_j$ under compositions of $\zeta|_{[s_k, s_{k+1}]}$, re-ordered via a substitution $\tau$ in a non-decreasing order. By replacing $\zeta|_{[s_j, s_{j+1}]}$ with $f_j$ and fixing the substitution $\tau$, we get a formula for an analytic, compact renormalization operator in a neighborhood of $\zeta^e$ in $\cP_rA_W^K$.

Since the tuple of compositions used in this construction depends on the order of the preimages of $s_j$ in $[0,\eta(0)]$, the renormalization operator $\cR$ will branch on the preimage of the set  $\bigcup_{i\neq j}\{s_i=s_j\}$.

A simple computation (cf. Theorem \ref{th:commutator1}) shows that for any tuple $$\zeta=(\{f_j\}_{j=1}^{K+2}, \{s_j\}_{j=0}^{K+2}) \in \cP_2A_W^K$$ such that $f_j$ are close to translations, renormalization decreases differences $$d_j(\zeta) = \|f_{j+1}-f_j\|_{D_{\eps}(s_j)}^{\infty}$$ geometrically fast. On the other hand, differences  $\|f_{j+1}-f_j\|_{W}^{\infty}$ remain bounded by some constant $C$. Hadamard's three-circle theorem implies that on a smaller domain $[0,1]\subset W_1\subset W$, differences $\|f_{j+1}-f_j\|_{C(W_1)}$ decrease geometrically fast. Hence, renormalizations of $\zeta$ approach the space $\cAC_W\times\RR^K$ geometrically fast in the metric $C(W_1)$, and since renormalization increases domains of definition, the same holds in the metric $C(W)$. We conclude that no new unstable directions appear for pairs with breaks of order $2$.

More specifically, let $\hat \Lambda_M^K\subset \Lambda_M^K$ denote a set of enriched pairs for which no two marked points lie in the same orbit of the pair. 
  
  \begin{thm}\label{thm-hyperb3}
The enriched interval exchange transformations $\hat \Lambda_M^K$ remain an invariant hyperbolic set for the operator $\cR^2$ in $\cP_2A_W$ with $K+1$ expanding directions.

Namely, there exists $\eps>0$ such that any orbit $\{\zeta_k\}_{k\in \ZZ}$ in $\hat \Lambda_M^K$ is a hyperbolic set of respective analytic branches of  $\cR^2$ defined in $\eps$-neighborhoods of $\zeta_k$ in $\cP_2A_W$,  with $K+1$ unstable directions, and with uniform  estimates on expansion and contraction rates.
    \end{thm}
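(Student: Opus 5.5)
The plan is to split the tangent space of $\cP_2A_W^K$ at a point $\zeta_k\in\hat\Lambda_M^K$ into a ``diagonal'' part, tangent to the embedded copy of the enriched space $\cAC_W\times\RR^K$, and a ``transverse'' part, recording the differences $f_{j+1}-f_j$ near the marked points. Then show that $D\cR^2$ is block upper-triangular with respect to this splitting. The diagonal block is already understood: Theorem~\ref{th:hyperb1} together with the computation for enriched pairs in the previous section shows that $\Lambda_M^K$ is hyperbolic for $\cR^2$ in $\cAC_W\times\RR^K$. Its unstable directions are the one coming from the rotation number plus the $K$ enriching coordinates, each expanded by $1/(\theta\,G(\theta))$. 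So the task reduces to showing that the transverse block is a uniform contraction. Hyperbolicity of the full operator then follows by the standard cone-field argument for block-triangular cocycles, with uniform rates along orbits.

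First I need analytic branches and their uniformity. Since marked points of $\zeta_k$ lie in distinct orbits, the marked points of $\zeta_k$ and of $\cR\zeta_k$, $\cR^2\zeta_k$ are non-coincident. I will make this quantitative, with a separation bounded below uniformly over $\hat\Lambda_M^K$, presumably by compactness restricted to a suitable subset. This is subtle, since points can come arbitrarily close along an orbit. If a uniform bound fails, I would instead work with the branch determined by a fixed substitution $\tau$ near each $\zeta_k$. Such a branch is analytic on a neighborhood whose size may vary, but the derivative estimates below do not depend on separation, so $\eps$ can still be chosen uniformly. With this in place, the branch of $\cR^2$ near $\zeta_k$ is a fixed finite composition of the $f_j$'s, rescaled. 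It is analytic and compact since the domains grow, as in Theorem~\ref{th:hyperb1}.

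Second, the transverse contraction. Linearize at $\zeta_k$, where all $f_j$ coincide with $\alpha$ or $T_\theta$, at a perturbation with jumps $\delta_j=\dot f_{j+1}-\dot f_j$. A new jump of $\cR\zeta$ at a new marked point $\tilde s_i$ is a composition in which one factor switches from $f_j$ to $f_{j+1}$. To first order it equals the translate of $\delta_j$ pulled back by translations, as in \eqref{eq-comm}, rescaled by $\lambda(x)=-x/\xi(0)$. Since $\delta_j=O(h^2)$ at $s_j$, rescaling multiplies its Taylor coefficients at $s_j$ by $|\xi(0)|^{m-1}$ for $m\ge2$. This is the computation in Theorem~\ref{th:commutator1}, and it contracts the local norms $d_j$ on $D_\eps(s_j)$ by a factor of order $\theta\,G(\theta)<1$ after two steps. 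It remains to pass from local to global norms on $W$. The global transverse norms stay bounded by a constant, and Hadamard's three-circle theorem gives geometric decay on a smaller domain $W_1$. The domain-increasing property of $\cR^2$ then gives geometric decay in $C(W)$. Perhaps a higher iterate $\cR^{2\ell}$ is needed, handled as in Theorem~\ref{th:commutator1}. A further point is that the almost-commutation defect and the break at $0$ are also of order $2$ and are contracted in the same way.

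The main obstacle is making the triangular structure precise. The transverse perturbation feeds into the diagonal coordinates, for instance by shifting the marked points and the commutator, but the diagonal part does not create jumps, so the off-diagonal block goes only one way. Together with the uniformity of the branches' domains, this yields hyperbolicity with exactly $K+1$ unstable directions and uniform rates.
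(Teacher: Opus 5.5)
Your proposal is correct and is essentially the paper's argument phrased at the level of the linearization. The paper shows that the local differences $d_j=\|f_{j+1}-f_j\|^\infty_{D_\eps(s_j)}$ shrink geometrically by the rescaling computation of Theorem~\ref{th:commutator1}; it upgrades this, via the three-circle theorem on a smaller domain and the growth of domains under $\cR$, to geometric convergence of renormalizations to the embedded copy of $\cAC_W\times\RR^K$, and concludes that no unstable directions appear beyond the $K+1$ of $\Lambda_M^K$ --- exactly your block-triangular splitting with a contracting transverse block.

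One small correction: the uniformity of $\eps$ does not follow from separation-independent derivative estimates. It follows because the branch given by a fixed composition of the $f_j$ and a fixed substitution $\tau$ is analytic on a neighborhood of uniform size; only the region where it agrees with the true $\cR$ depends on how close the marked points come, and this is what the paper's remark after the theorem uses.
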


    \begin{rem}
      We removed the pairs for which several marked points belong to the same orbit to avoid having to deal with branching of $\cR$. However, $\eps$ is uniform and does not depend on the distance to such pairs, since hyperbolicity result applies to respective branches of $\cR$.
      
      Furthermore, for every element  $\zeta\in \Lambda_M^K$, there is a value of $n\in\NN$ such that the renormalization
      $$\cR^{2n}\zeta\in \hat\Lambda_M^{K_1}\text{ with }K_1\leq K,$$
      at which point Theorem~\ref{thm-hyperb3} can be applied to establish the hyperbolicity of its $\cR^2$-orbit.
    \end{rem}
    \section{General maps with multiple breaks}
    \label{sec:general}

  Let us denote the space of pairs with breaks of order $1$ by
  $$\cB_W^K\equiv \cP_1A_W^K.$$

In the notation of Definition~\ref{defn-miltibreak}, for $\zeta\in\cB_W^K$,  let us denote
$\bar\beta(\zeta)$ the tuple
$$\bar\beta(\zeta)_j =\left[ \begin{array}{l}\log \displaystyle\frac{f'_j(s_{j+1})}{f'_{j+1}(s_{j+1})} \text{ for } 1\le j\le K+1, j\neq l \\[12pt]
    \log [f_l, f_{l+1}]'(0) \text{ for } j=l\\
  \end{array}\right.$$
  We call $\bar\beta(\zeta)$ the {\it multibreak parameter} of $\zeta$. The last number is called the break at zero. Note that breaks are preserved in renormalization:
   \begin{prop}\label{prop-multibreak}
    In the above notation,
    $$\bar\beta(\cR\zeta)=-\hat \tau(\bar\beta(\zeta))$$
    where $\hat \tau\in S_{K+1}$ is a substitution of $\{1, 2,\dots, K+1\}$ defined by $\zeta$ and locally constant in $\zeta$.
    \end{prop}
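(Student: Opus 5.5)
Let $\zeta$ be viewed as the piecewise-analytic map defined by the $f_j$ on $[s_{j-1},s_j]$. The plan is to track how a single break point of the renormalized tuple arises from a break of $\zeta$, using the chain rule on compositions. Then we check that the rescaling by the homothety $\lambda(x)=-x/\xi(0)$ reverses orientation, which flips the sign of the logarithm.

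First, fix the combinatorics. The pre-renormalization $p\cR\zeta$ consists of the first-return branches $\xi$ and $\xi^\chi\circ\eta$ to $[\eta\circ\xi^\chi(0),\xi(0)]$. As described before Definition~\ref{defn-miltibreak}, each of its pieces is a composition $F=f_{j_m}\circ\dots\circ f_{j_1}$ of the maps $f_j$, whose indices are determined by where the orbit segments fall. The marked points $t_k$ of the new pair are the unique first-return preimages of the old $s_k$ (assuming non-coincidence). The order of these points, and hence the substitution $\tau$, is locally constant in $\zeta$. The standard fact about first-return maps of a circle homeomorphism is that each orbit segment of length $\le \chi+1$ passes through each old break at most once. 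So the new break at $t_k$ comes from exactly one old break $s_j$ (or from $0$), with $s_j=F_1(t_k)$, where $F_1$ is the initial part of the composition. At $t_k$ the two neighbouring branches have the form $G\circ f_j\circ F_1$ and $G\circ f_{j+1}\circ F_1$ with the same $F_1,G$. By the chain rule, the ratio of one-sided derivatives at $t_k$ is
\[
\frac{G'(f_j(s_j))\,f_j'(s_j)\,F_1'(t_k)}{G'(f_{j+1}(s_j))\,f_{j+1}'(s_j)\,F_1'(t_k)}=\frac{f'_j(s_j)}{f'_{j+1}(s_j)}.
\]
This uses $f_j(s_j)=f_{j+1}(s_j)$, which holds because the break has order $1$ and the maps agree at the point. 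Hence the new log-break equals the old one before rescaling.

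Next, treat the break at zero and the new zero. The new origin is $0$, and the break there is measured by $[f_l,f_{l+1}]'(0)$ for the new pair. By the relation \eqref{eq-comm-comp}, the commutator of the pre-renormalization satisfies $\eta_1\circ\xi_1=f_1\circ\eta\circ\xi$ and $\xi_1\circ\eta_1=f_1\circ\xi\circ\eta$ for a common composition $f_1$. Differentiating at $0$, and using that $\eta\circ\xi(0)=\xi\circ\eta(0)$, gives the ratio of derivatives $(\eta\circ\xi)'(0)/(\xi\circ\eta)'(0)$, so the log-break at zero is reproduced. Conversely, the old break at zero may reappear at some new marked point $t_k\ne 0$ (a preimage of $0$). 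In that case the chain-rule computation above applies, with the jump $f_l$ versus $f_{l+1}$ read off as a derivative ratio. This is why $\hat\tau$ permutes all $K+1$ entries including index $l$. I expect this bookkeeping to be the main obstacle: ensuring that the break at $0$ and the break at $\xi(0)$ versus $\eta(0)$ (glued endpoints) together give a bijection onto $\{1,\dots,K+1\}$ rather than double counting. The plan is to handle it by working on the glued circle $M$, where the breaks are exactly $K+1$ points and renormalization induces a bijection of the break set.

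Finally, conjugate by $\lambda$. Ratios of derivatives of the conjugated maps are unchanged, since $\lambda$ is affine. However, $\lambda$ reverses orientation, so left and right neighbours are exchanged: the branch $f_j$ now lies to the right. The quotient $f'_j/f'_{j+1}$ is therefore inverted, giving a factor $-1$ in the logarithm. The reordering $\tau$ then yields $\bar\beta(\cR\zeta)=-\hat\tau(\bar\beta(\zeta))$. Here $\hat\tau$ is local constancy inherited from that of $\tau$ and of the combinatorics of the return map.
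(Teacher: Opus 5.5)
The paper gives no argument beyond the remark that breaks are preserved under renormalization. Your outline (chain rule along the first-return compositions, a sign flip from the orientation-reversing $\lambda$, and $\hat\tau$ from the reordering) is therefore the intended one, and your treatment of the nonzero marked points is fine. The break at zero, which you rightly single out as the delicate point, is where the proposal goes wrong.

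For $\ell=1$ the identity \eqref{eq-comm-comp} has the order reversed; as printed it holds only for even $\ell$. From $(\eta_1,\xi_1)=(\xi,\xi^\chi\circ\eta)$ one gets
\[
\eta_1\circ\xi_1=\xi^\chi\circ(\xi\circ\eta),\qquad \xi_1\circ\eta_1=\xi^\chi\circ(\eta\circ\xi),
\]
so $(\eta_1\circ\xi_1)'(0)/(\xi_1\circ\eta_1)'(0)=(\xi\circ\eta)'(0)/(\eta\circ\xi)'(0)$, the reciprocal of what you wrote.

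Moreover, conjugating by the linear map $\lambda$ does not change this ratio at all. The ``left and right are exchanged'' mechanism concerns positions, and in $p\cR\zeta$ the map $\eta_1=\xi$ lies to the left of $0$, whereas $\eta$ lay to the right. Track positions consistently with your version of the identity: pre-renormalization inverts the zero-break and $\lambda$ inverts it again, so you get no sign change. Your conclusion comes out right only because the mis-ordered identity and the switch between name-based and position-based conventions cancel.

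Since the sign is the whole content of the proposition, this step must be redone; it is short. Let $\hat\eta_1,\hat\xi_1$ be the $\lambda$-conjugates, so that $\hat\xi_1$ is now the map to the left of $0$. Reading the break at zero as $\log\bigl((f_l\circ f_{l+1})'(0)/(f_{l+1}\circ f_l)'(0)\bigr)$ with $f_l$ the left map, one gets
\[
\log\frac{(\hat\xi_1\circ\hat\eta_1)'(0)}{(\hat\eta_1\circ\hat\xi_1)'(0)}=\log\frac{(\xi_1\circ\eta_1)'(0)}{(\eta_1\circ\xi_1)'(0)}=\log\frac{(\eta\circ\xi)'(0)}{(\xi\circ\eta)'(0)}=-\log\frac{(\xi\circ\eta)'(0)}{(\eta\circ\xi)'(0)}.
\]
The last expression is minus the break at zero of $\zeta$, as claimed.

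Your claim that the old break at zero may reappear at a new marked point $t_k\neq 0$ is also false. The remedy you sketch for that case would not work either. At $0$ the two adjacent maps send $0$ to opposite endpoints, so the hypothesis $f_j(s_j)=f_{j+1}(s_j)$ behind your chain-rule formula fails, and the zero-break is not $\log(f_l'(0)/f_{l+1}'(0))$.

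In fact the case never arises. For $t\in(0,\xi(0)]$ the intermediate points $\xi^i\circ\eta(t)$, $0\le i<\chi$, lie in $[\eta(0),\xi^\chi\circ\eta(0)]$, strictly to the left of $0$. Points of $[\xi^\chi\circ\eta(0),0)$ return after a single step of $\xi$. Hence $0$ is met only at time zero, by $t=0$, and this gives the following:
\begin{itemize}
\item the old break at zero always becomes the new break at zero;
\item the $K$ nonzero marked points correspond bijectively to the new ones through their first-return preimages;
\item $\hat\tau$ moves the index $l$ only because the number of marked points to the left of $0$ changes.
\end{itemize}
This resolves your double-counting concern without passing to the circle $M$. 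With these two corrections your argument proves the proposition.
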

The substitution $\hat \tau$  re-orders the list of nonzero marked points $s_j, j\neq l$ in the same way as the substitution $\tau$ introduced above, and also accounts for changing  position of $s_l=0$ in the list $\{s_j\}$.

%Let $R_{\zeta}(\bar \beta) = -\theta(\bar\beta)$ where $\theta$ is as above.
We denote $\cB_W^{K, \bar\beta}$ the Banach submanifold of pairs with a fixed multibreak parameter.
On a neighborhood of $\zeta$,
$$\cR (\cB_W^{K,\bar\beta}) = \cB_W^{K,-\hat \tau(\bar\beta)}.$$
Hence (any analytic branch of) $\cR^2$ can be viewed as an analytic operator in the disjoint union of $(K+1)!$ Banach spaces of the form $\cB_W^{K, \hat \tau(\bar\beta)}$ for all $\hat \tau\in S_{K+1}$.
By Theorem \ref{thm-hyperb3},  $\cR^2$ has a hyperbolic set in $ \cup_{\hat \tau\in S_{K+1}} \cB_W^{K,0}$ of the form $\cup_{\hat \tau\in S_{K+1}} \hat \Lambda_{M}^K$.

One can construct a one-to-one analytic projection $\pi\colon \cB_W^{K,\bar\beta}\to \cB_W^{K,0}$, e.g. by post-composing each $f_j$ with a suitable affine map to remove a break at $s_j$. Hence for small  $\bar \beta$,  branches of  $\cR^2|_{\cB_W^{K,\bar\beta}}$ can be viewed as small perturbations of branches of $\cR^2|_{\cB_W^{K,0}}$.
Structural stability of hyperbolicity implies the following:
\begin{thm}\label{thm-hyperb4}
  There exists $\delta>0$ such that the following holds. For $\|\bar\beta\|_1\leq\delta$, there exists a continuous in $\bar\beta$ family of $\cR^2$-invariant sets $\cup_{\hat \tau\in S_{K+1}}\hat\Lambda_M^{K,\hat \tau(\bar\beta)}$ with $\hat\Lambda_M^{K, \bar 0} = \hat \Lambda_M^{K}$ that are hyperbolic for $\cR^2$.

  Namely, for a universal constant $\eps>0$, each orbit $\{\zeta_k\}_{k\in \ZZ}\subset \hat \Lambda_M^{K, 0}$ embeds into a uniformly continuous family of orbits $\{\zeta_k^{\bar\beta}\}_{k\in \ZZ}$, each being hyperbolic in its $\eps$-neighborhood under the action of respective branches of $\cR^2$, with $K+1$ expanding directions.

%  The set $\Lambda_M^{K, \bar \beta}$ serves as a hyperbolic attractor for pairs $\zeta\in\cB_W^{K,\bar\beta}$ with rotation numbers $\rho(\zeta)\in\bT_M$.
      \end{thm}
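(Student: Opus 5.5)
We will prove Theorem~\ref{thm-hyperb4} by persistence of hyperbolic orbits under small perturbations of the map, in the spirit of the Hadamard--Perron / shadowing theory for non-invertible compact maps on Banach manifolds. Since the orbits are indexed by $k\in\ZZ$ and the branch of $\cR^2$ may change along the orbit, we work with sequences of maps rather than with a single operator.

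\emph{Step 1: choice of charts.} Fix an orbit $\{\zeta_k\}_{k\in\ZZ}\subset\hat\Lambda_M^{K,0}$, and let $F_k$ be the analytic branch of $\cR^2$ defined on the $\eps$-neighborhood $U_k$ of $\zeta_k$ in $\cB_W^{K,0}$ given by Theorem~\ref{thm-hyperb3}. For small $\bar\beta$, the branch of $\cR^2$ near $\zeta_k$ maps $\cB_W^{K,\hat\tau_k(\bar\beta)}$ into $\cB_W^{K,\hat\tau_{k+1}(\bar\beta)}$, where the permutations $\hat\tau_k$ are determined by the branch (Proposition~\ref{prop-multibreak}, applied twice so that the signs cancel). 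Conjugating by the analytic projections $\pi$, we obtain maps
$$F_k^{\bar\beta}=\pi\circ\cR^2\circ\pi^{-1}\colon U_k\to\cB_W^{K,0}.$$
The main estimate needed is
$$\|F_k^{\bar\beta}-F_k\|_{C^1(U_k)}\le C\|\bar\beta\|_1,$$
with $C$ independent of $k$. This follows because $\pi$ and $\pi^{-1}$ depend analytically on $\bar\beta$ (post-composition with affine maps whose coefficients are analytic in $\bar\beta$). Moreover, the branches of $\cR^2$ are analytic on a complex neighborhood and have a uniform bound there, since $\eps$ is universal. Cauchy estimates then give the $C^1$ bound uniformly in $k$.

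\emph{Step 2: persistence of the orbit.} Theorem~\ref{thm-hyperb3} provides uniform splittings $E^u_k\oplus E^s_k$ along the orbit, with $\dim E^u_k=K+1$, expansion rate $\ge\mu>1$ on unstable directions, and contraction rate $\le\nu<1$ on stable ones. We seek $\zeta_k^{\bar\beta}=\zeta_k+v_k$ satisfying $F_k^{\bar\beta}(\zeta_k+v_k)=\zeta_{k+1}+v_{k+1}$. In the space $\ell^\infty(\ZZ)$ of bounded sequences $(v_k)$, this equation reads $\Phi_{\bar\beta}(v)=0$, where
$$\Phi_{\bar\beta}(v)_{k+1}=F_k^{\bar\beta}(\zeta_k+v_k)-\zeta_{k+1}-v_{k+1}.$$
At $\bar\beta=0$, $v=0$, the linearization is $(Lv)_{k+1}=DF_k v_k-v_{k+1}$. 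Hyperbolicity makes $L$ invertible on $\ell^\infty$: the stable component is solved forward in $k$, and the unstable component is solved backward using invertibility of $DF_k|_{E^u_k}$. In both cases the geometric series are controlled by $\mu,\nu$. The same conclusion can be obtained from a contraction mapping for the sequence $v$, without assuming $DF_k$ is invertible on $E^s_k$, which matters since $DF_k$ is compact. The implicit function theorem in $\ell^\infty$, or equivalently the contraction argument, then yields a unique small solution $v(\bar\beta)$ depending continuously, indeed analytically, on $\bar\beta$, with $\|v\|\le C'\|\bar\beta\|_1$, uniformly in the orbit. Uniqueness gives invariance of the resulting set under the maps, and the union over orbits defines $\hat\Lambda_M^{K,\hat\tau(\bar\beta)}$.

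\emph{Step 3: hyperbolicity of the perturbed orbits.} The derivatives $DF_k^{\bar\beta}(\zeta_k^{\bar\beta})$ are $O(\|\bar\beta\|_1)$-close to $DF_k(\zeta_k)$ uniformly in $k$. The standard cone-field argument transfers the invariant splitting, with rates $\mu-O(\delta)$ and $\nu+O(\delta)$, so the perturbed orbits are hyperbolic with $K+1$ expanding directions for $\delta$ small. Compactness of the differentials ensures that the unstable dimension is finite and stays equal to $K+1$.

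\emph{Main obstacle.} The main difficulty is the uniformity in $k$ in Step 1 and Step 2. The branches $F_k$ change along the orbit, and the reordering $\hat\tau_k$ permutes the break parameters. We must check that the branches, and the permuted fibres $\cB_W^{K,\hat\tau_k(\bar\beta)}$, remain well defined on $\eps$-neighborhoods that do not shrink as marked points approach coincidence. The Remark after Theorem~\ref{thm-hyperb3} asserts exactly this, and we will rely on it. We will also use that $\|\hat\tau(\bar\beta)\|_1=\|\bar\beta\|_1$, so the size of the perturbation is the same at every step of the orbit.
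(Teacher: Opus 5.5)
Your proposal is correct and follows the paper's route. The paper also uses the analytic projection $\pi\colon\cB_W^{K,\bar\beta}\to\cB_W^{K,0}$ to view the branches of $\cR^2$ on $\cB_W^{K,\bar\beta}$ as small perturbations of the branches on $\cB_W^{K,0}$, and then simply invokes structural stability of hyperbolicity; your Steps 2--3 (the implicit function theorem on bounded sequences and the cone-field argument, made uniform in $k$ by the universal $\eps$ of Theorem~\ref{thm-hyperb3}) spell out that standard persistence argument in detail.
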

%  \begin{thm}\label{thm-hyperb4}
%  For every $\zeta\in \Lambda_M^{K}$, there exists $\eps>0$ such that the following holds. For $||\bar\beta||_1<\eps$, there exists a continuous  family $\zeta^{\bar \beta}$ with multibreak parameter $\bar \beta$ and rotation number $\rho(\zeta^{\bar\beta}) = \rho(\zeta)$,  whose orbit is hyperbolic under $\cR$ with $K+1$ expanding directions in the disjoint union $\bigcup \cB_W^{K, R^n(\bar\beta)}$.XS  \end{thm}

    As shown in \cite{KhTepl}, the sequence of renormalizations for maps with multiple breaks converges to a sequence of piecewise-Moebius maps geometrically fast in $C^1(\RR)$.
    Applying this statement to tuples from $\hat \Lambda_M^{K, \bar \beta}$, we get that  $\hat \Lambda_M^{K, \bar \beta}$ consists of tuples of piecewise-Moebius maps.

Recall, that each $\zeta\in \cB_W^{K,\bar \beta}$ with points $s_j$ in a non-decreasing order defines a circle map with breaks. If this map has irrational rotation number of bounded type, it is continuously conjugate to an irrational rotation, via a map $\phi$ that is unique up to translation. Normalizing this conjugacy by $\phi(0)=0$, we will say that \emph{positions} of the breaks for $\zeta$ are values of $\phi$ at the marked points $s_j$, $1\le j\le K+1$, $j\neq l$.

  Furthermore, we obtain the following  statement:
  \begin{thm}\label{thm-stable}
    Let $\zeta^{\bar \beta}\in\hat \Lambda_M^{K,\bar\beta}$ be a continuous family of pairs as above. Then for small $\|\bar \beta\|$, the stable manifold of $\zeta^{\bar \beta}$ in $\cB_W^{K,\bar \beta}$  is a local analytic manifold of codimension $K+1$.

    The rotation number and the positions of the breaks remain constant along the stable manifold and do not depend on $\bar \beta$.
%%      \item let $\gamma\in W$. Then $\gamma$ is $C^{1+\alpha}$ conjugate to $\zeta$ with $\alpha=\alpha(M)$;
   %     \item reducing $\eps$ in Theorem~\ref{thm-hyperb4} if needed, we have a global rigidity result: $W$ is globally an analyic submanifold, and every $\gamma\in \cB_W^{K,\beta}$ which is continuously conjugate to $\zeta$ by a conjugacy which maps break to breaks of equal size lies in $W$. In other words, $W$ is equal to the combinatorial class of $\zeta$ in the space of analytic pairs.
%      \end{enumerate}
    \end{thm}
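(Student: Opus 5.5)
The stable manifold statement will come from the Hadamard--Perron (stable manifold) theorem applied to the hyperbolic orbit given by Theorem~\ref{thm-hyperb4}. The operator here is a sequence of analytic branches of $\cR^2$ acting between the Banach manifolds $\cB_W^{K,\hat\tau_k(\bar\beta)}$, which lets us avoid a single fixed operator. By Theorem~\ref{thm-hyperb4} the orbit $\{\zeta_k^{\bar\beta}\}$ has, in $\eps$-neighborhoods, uniformly hyperbolic splittings with exactly $K+1$ unstable directions. The branches are analytic, and their differentials are compact, as inherited from Theorem~\ref{th:hyperb1} and the construction in $\cP_2A_W^K$. Hence the non-autonomous stable manifold theorem for sequences of analytic maps gives a local stable manifold $W^s(\zeta_0^{\bar\beta})\subset\cB_W^{K,\bar\beta}$. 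It is an analytic graph over the stable subspace, which has codimension $K+1$. Analyticity follows from the standard graph-transform argument, because the contraction is uniform and the maps are analytic, so the limit is a uniform limit of analytic graphs. The projection $\pi$ identifies all these spaces with $\cB_W^{K,0}$, so the $K+1$ count is transferred directly.

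Next I show that the rotation number is constant on $W^s$. For $\gamma\in W^s(\zeta^{\bar\beta}_0)$ we have $\|\cR^{2n}\gamma-\zeta^{\bar\beta}_{n}\|\to 0$. The integer parts $\chi(\cR^{2n}\gamma)$ are locally constant near pairs with bounded type rotation numbers, since $\rho$ is continuous and the pairs stay away from rational endpoints. Therefore every continued fraction entry of $\rho(\gamma)$ agrees with that of $\rho(\zeta_0^{\bar\beta})$, and the two rotation numbers coincide. Independence of $\bar\beta$ holds because the family $\zeta_k^{\bar\beta}$ is continuous in $\bar\beta$ and is built by continuation from $\zeta_k\in\hat\Lambda_M^{K}$. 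Along the continuation the renormalization combinatorics, namely the sequence $\chi$ and the branches $\tau,\hat\tau$, stay the same, so $\rho(\zeta_0^{\bar\beta})=\rho(\zeta_0)$.

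The positions of the breaks come next. The idea is that positions are encoded by the combinatorics of the enrichment under renormalization. For each $n$ the renormalized marked points of $\cR^{2n}\gamma$ lie in specific dynamical intervals of the $n$-th dynamical partition. Since $\cR^{2n}\gamma$ stays $\eps$-close to $\zeta^{\bar\beta}_n$ and we use the same branch, they lie in the same combinatorial positions as for $\zeta_0^{\bar\beta}$. Conjugating to the rotation turns the dynamical partitions into partitions whose mesh shrinks geometrically, since the type is bounded. Then $\phi_\gamma(s_j(\gamma))$ and $\phi_{\zeta}(s_j(\zeta))$ lie in the same interval of every partition, so they are equal. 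The same reasoning applied to the continuous family in $\bar\beta$ gives independence of $\bar\beta$ and agreement with $\zeta_0\in\hat\Lambda_M^K$, where the positions are just the marked points of the rotation.

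The main obstacle I expect is this last step. One must make sure that "same branch at every step" really pins down which interval of the dynamical partition each marked point occupies. The risk is a marked point sitting near an endpoint, where branches jump; but the removal of orbit-coincidences in $\hat\Lambda$ together with the uniform $\eps$ should control this. One must also check that the orbit under the chosen branches corresponds to the genuine renormalization of the circle map. For this I would use the fact that the points $s_j$ stay in non-decreasing order, which holds near $\hat\Lambda_M^{K,\bar\beta}$. Given that, $\cR^{2n}\gamma$ is an actual renormalization, and the combinatorial argument goes through.
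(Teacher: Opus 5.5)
Your proposal follows the paper's argument. The codimension-$(K+1)$ analytic stable manifold comes from the infinite-dimensional Hadamard--Perron theorem applied to the sequence of branches of $\cR^2$ along the hyperbolic orbit of Theorem~\ref{thm-hyperb4}. Constancy of the rotation number and of the break positions, and their independence of $\bar\beta$, comes from the renormalizations staying close to $\cR^{2k}(\zeta^0)$ and therefore sharing its combinatorics; you spell this out with continued-fraction digits and shrinking dynamical partitions, where the paper leaves it implicit.

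One point needs care, and the paper's two-line proof passes over it too. You claim the marked points stay in order throughout the uniform $\eps$-neighbourhoods of $\hat\Lambda_M^{K,\bar\beta}$, but the marked points of $\zeta_k$ can be arbitrarily close to each other or to $0$. The branch iterates are genuine renormalizations only when these gaps exceed the deviations, which are exponentially small along $W^s$ and of size $O(\|\bar\beta\|)$ along the $\bar\beta$-family. That is exactly what the later $\delta$-separated hypothesis guarantees, and no such hypothesis appears in this theorem or in your argument.
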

  \begin{proof}
The first statement follows from the infinite-dimensional Hadamard-Perron Theorem, see \cite{elbialy1}. The second statement follows from the fact that subsequent renormalizations of pairs from the stable manifold remain in a small neighborhood of $\cR^k(\zeta^0)$.
  %  The global version of rigidity formulated in part (2) is a direct consequence of compactness of the image of the renormalization operator.

  \end{proof}

  We conclude with a global rigidity result, for which we will need to further restrict the combinatorics of the pairs.

  \begin{defn}
Let us say that a pair $\zeta$ with multiple breaks is of  {\it $\delta$-separated type } for $\delta>0$ if $\rho(\zeta)\in\RR\setminus\QQ$ and, for all $n\geq 0$, the positions of the marked points of $\cR^{2n}\zeta$ differ pairwise by at least $\delta$. 
    \end{defn}

  \noindent
  We will simply say that $\zeta$ is of a separated type if such a $\delta$ can be found for it.
  For instance, periodic orbits of $\cR^2$ are all of a separated type. 

  \begin{thm}\label{thm-rigidity}
    Suppose that $\zeta^{\bar \beta}\in \hat \Lambda_M^{K,\bar \beta}$  is a continuous family of pairs as above. Assume that
    $\zeta^0$ is of a $\delta$-separated type.
    %%Suppose that for some $\delta$, all marked points of $\mathcal R^n \zeta^0$ for $n>0$ remain at a distance at least $\delta$ from each other.
    Then for $\|\bar \beta\|<C(\delta)$, the following holds.

Let $\kappa>0$ and let $\gamma$ be a $C^{2+\kappa}$-smooth pair with multiple breaks (cf. \cite{GhKh}). Suppose that there exists a continuous conjugacy $\phi$ between $\gamma$ and $\zeta^{\bar\beta}$ which maps breaks to breaks of the same size. Then $\phi\in C^{1+\alpha}$ for some $\alpha>0$.
  Thus combinatorial equivalence implies $C^{1+\alpha}$ conjugacy for $C^r$-smooth multicritical pairs with $r>2$.
  \end{thm}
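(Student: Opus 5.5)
Our plan is the standard one: renormalization convergence plus geometric control gives $C^{1+\alpha}$ smoothness of the conjugacy. We reduce to showing that the renormalizations $\cR^{2n}\gamma$ converge to the orbit $\cR^{2n}\zeta^{\bar\beta}$ at a geometric rate in the $C^1$ metric on the dynamical intervals. Once we have this, the classical argument (as for single breaks in \cite{KhanYam}, following de Faria--de Melo) finishes the proof. Bounded type gives bounded geometry of the dynamical partitions for both maps. Geometric closeness of the renormalizations then shows that the ratios of lengths of corresponding partition intervals of $\gamma$ and $\zeta^{\bar\beta}$ differ by $O(\mu^n)$. This yields $\phi\in C^{1+\alpha}$ with $\alpha$ determined by $\mu$ and $M$.

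The first step is to bring $\gamma$ into the analytic setting. By \cite{KhTepl,GhKh}, $\cR^{n}\gamma$ converges geometrically fast in $C^1$ (indeed $C^2$ on each piece) to a sequence of piecewise-M\"obius pairs $\gamma_n$. These have the same rotation number $\theta\in\bT_M$ and the same multibreak parameters $\pm\hat\tau(\bar\beta)$ as $\zeta^{\bar\beta}$, since $\phi$ maps breaks to breaks of equal size and Proposition~\ref{prop-multibreak} applies. They also have the same break positions, because positions are topological invariants and $\phi$ respects the marked points. Piecewise-M\"obius pairs near $\Lambda_M^K$ form a finite-dimensional family inside $\cB_W^{K}$ with a uniform complex domain $W$. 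So it suffices to show two things: the M\"obius pairs $\gamma_n$ lie, for $n$ large, in the local stable manifold of $\cR^{2n}\zeta^{\bar\beta}$ given by Theorem~\ref{thm-stable}, and within that manifold they converge geometrically.

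The second step, which we expect to be the main obstacle, is to show that $\gamma_n$ enters the $\eps$-neighborhood of the orbit of $\zeta^{\bar\beta}$ and is captured by the stable manifold. Here we use that $\zeta^0$ is $\delta$-separated. For $\|\bar\beta\|<C(\delta)$, the break positions of $\cR^{2n}\zeta^{\bar\beta}$ stay $\delta/2$-apart. Hence the relevant branch of $\cR^2$ is fixed along the orbit, and its hyperbolicity neighborhood (Theorem~\ref{thm-hyperb4}) has uniform size. By a priori bounds, $\gamma_n$ for large $n$ is a small-break piecewise-M\"obius pair near $\Lambda_M^K$, so it lies near some element of the hyperbolic set. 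We must then argue that it shadows exactly the orbit of $\zeta^{\bar\beta}$. The $K+1$ unstable directions are parametrized by the rotation number and the $K$ break positions (enrichment coordinates), with expansion factor $-1/\theta$ as noted in the previous section. We first try a cone/graph-transform argument: any point whose forward orbit stays in the $\eps$-neighborhood lies on the local stable manifold, by uniform hyperbolicity. The unstable coordinates of $\gamma_n$ agree with those of $\cR^{2n}\zeta^{\bar\beta}$, since the rotation number and positions are constant on stable manifolds and are shared by $\gamma_n$. Therefore $\gamma_n$ lies on the stable manifold of $\cR^{2n}\zeta^{\bar\beta}$ within the fixed multibreak slice $\cB_W^{K,\hat\tau(\bar\beta)}$. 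Separation is needed precisely so that no branch switching destroys the uniform neighborhood along the whole forward orbit.

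The third step is to conclude. Contraction along the stable manifold gives $\|\cR^{2k}\gamma_n-\cR^{2n+2k}\zeta^{\bar\beta}\|\le C\mu^k$ in the analytic norm on $W$, hence in $C^1$. Combined with the geometric convergence of $\cR^{2n}\gamma$ to $\gamma_n$, this gives geometric $C^1$ convergence of $\cR^{2n}\gamma$ to $\cR^{2n}\zeta^{\bar\beta}$. The standard partition argument then produces the H\"older derivative of $\phi$.
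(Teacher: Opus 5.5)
Your overall architecture matches the paper's: Khanin--Teplinsky M\"obius approximants, hyperbolicity of the perturbed horseshoe, then the standard argument of \cite[Sec.~9]{KhKh}. However, the mechanism in your second and third steps does not work, for three reasons.

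\emph{The approximants do not have the exact combinatorics.} The M\"obius pairs $\gamma_n$ are only $C^1$-approximations of $\cR^n\gamma$ and are not conjugate to it. So there is no reason for $\rho(\gamma_n)$, or the break positions of $\gamma_n$, to equal those of $\cR^n\zeta^{\bar\beta}$. They agree only up to a small error, since rotation number is continuous but not locally constant at irrational values.

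\emph{Matching combinatorics would not place a pair on the stable manifold.} Even for an analytic pair with exactly matching combinatorics, Theorem~\ref{thm-stable} only says these invariants are constant along the stable manifold. The converse you use, that their level set in the $\eps$-neighbourhood is the local stable manifold, is itself a rigidity statement you would have to prove.

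\emph{Your third step does not close even granting $\gamma_n\in W^s_{loc}$.} We have $\cR^{2k}\gamma_n\neq\gamma_{n+k}$. Since $\cR^{2n}\gamma$ is not analytic and the $K+1$ unstable directions amplify discrepancies, closeness of $\cR^{2n}\gamma$ to $\gamma_n$ says nothing about $\cR^{2n+2k}\gamma$ versus $\cR^{2k}\gamma_n$.

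The missing idea is shadowing of an exponentially converging pseudo-orbit, which needs only approximate combinatorics. The pairs $\cR^2(\gamma_{2k})$ and $\gamma_{2k+2}$ are exponentially close in $C^1$ on the real intervals. M\"obius maps form a finite-dimensional family, so they are also exponentially close in $C(W)$. Thus $\{\gamma_{2k}\}$ is a pseudo-orbit of $\cR^2$ in $\cB_W^{K,\hat\tau(\bar\beta)}$ with exponentially decaying errors.

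Suppose it stays in the $\eps$-neighbourhood of the orbit of $\zeta^0$. Your combinatorial invariants, used only approximately, are what identify $\cR^{2k}\zeta^0$ as the relevant element of $\Lambda_M^K$. Then uniform hyperbolicity from Theorem~\ref{thm-hyperb4} forces $\gamma_{2k}$ to be exponentially close to $\cR^{2k}\zeta^{\bar\beta}$. The unstable component of the deviation is expanded yet must stay bounded, so it is exponentially small; the stable component contracts.

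You also misplace the role of $\delta$-separation. A uniform neighbourhood for the branches of $\cR^2$ is already given by Theorem~\ref{thm-hyperb4}. What separation actually provides is that the breaks of $\gamma_k$ stay a definite distance apart. Together with the second-derivative bound of \cite[Proposition 4]{GhKh}, this puts the M\"obius pieces in a compact family. Hence for $\|\bar\beta\|<C(\delta)$ they are $\eps$-close to translations and defined on $W$. Your phrase ``by a priori bounds, $\gamma_n$ is near $\Lambda_M^K$'' skips exactly this step: a M\"obius piece on a very short interval is unconstrained and need not be close to a translation on $W$.
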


  It is an interesting question whether the condition of a separated type can be weakened or even abandoned. It is well-known that degeneration of geometry due to the rotation number of an unbounded type can lead to a loss of rigidity in the smooth case for circle maps with singularities (see e.g. \cite{AvilaRigidity}) -- but the mechanism by which a degeneration appears is different here.

  \begin{proof}[Proof of Theorem~\ref{thm-rigidity}]
    Due to \cite{KhTepl}, the sequence of renormalizations of $\gamma$ converges to a sequence of piecewise-Moebius maps $\gamma_k$ geometrically fast in $C^1(\RR)$. Since $\gamma$ and $\zeta^{\bar\beta}$ are continuously conjugate, distance between break points of   $\gamma_k$ remains bounded below.   Moreover, due to \cite[Proposition 4]{GhKh}, second derivatives of these Moebius maps on respective intervals between breaks remain bounded. Hence respective Moebius maps belong to a compact set in the space of Moebius maps. We conclude that for any $\eps>0$ there exists  $C(\eps, \delta)$ such that for  $\|\beta\|<C(\eps, \delta)$, Moebius maps that define $\gamma_k$ are $\eps$-close to translations and are well-defined in $W$.

    Therefore tuples $\cR^{2k}(\gamma)$  are exponentially close in $C^1(\RR)$ to tuples of Moebius maps $\zeta^{\bar \beta}_{2k} \in \cB_{W}^{K,\hat \tau(\beta)}$ that stay in a small neighborhood of $ \cR^{2k} (\zeta^{0})$. The latter form a pseudo-orbit in the uniform metric in $W$:  Moebius maps of the tuple $\cR^2(\zeta^{\bar \beta}_{2k})$ are exponentially close to Moebius maps of $\zeta^{\bar \beta}_{2k+2}$ in $C^1(\RR)$, hence in $C(W)$.
%    For arbitrary $\gamma$,   on intervals between breaks that are long enough in comparison with $\beta$, restrictions of  $g_k$ are Moebius maps that are well-defined in $W$. If some of the intervals are small in comparison with $\beta$, we will modify $g_k$ to keep the same size of the breaks and guarantee that $g_k$ is affine on these intervals. We find a sequence $\tilde g_k$ of Moebius maps defined in $W$ such that $\tilde g_k$ is a $c|\beta|$-pseudoorbit for $\cR$. Shadowing lemma implies that some orbit $\hat g_k$ of $\cR$ is $c|\beta|$-close to $\tilde g_k$ in $W$, hence $g_k$.  eventually stays in $c|\beta|$-neighborhood of $\Lambda_M^{K, \bat \beta}$, hence  restrict to Moebius maps that are

Due to Theorem \ref{thm-hyperb4}, this implies that $\zeta^{\bar \beta}_{2k}$ are exponentially close to the points $\cR^{2k}(\zeta^{\bar \beta})$ of the horseshoe, hence $C^1$-distance between $\cR^{2k}(\gamma)$ and $\cR^{2k}(\zeta^{\bar \beta})$ decreases geometrically fast.

    The $C^{1+\alpha}$ smoothness of the conjugacy is a standard consequence of such convergence for circle maps of bounded type. In \cite[Sec.9]{KhKh}, all neccessary estimates are carried out for maps with a single break and rotation number of bounded type. Since only convergence of renormalizations and Denjoy-type estimates were used in \cite{KhKh}, the proof applies \emph{verbatim} to the case of maps with additional break discontinuities.

    \end{proof}

  %We see that:
  %\begin{prop}\label{prop-multibreak}
   % In the above notation,
   % $$\bar\beta(\cR\zeta)=-\bar\beta(\zeta).$$
   % \end{prop}
%As a consequence, the second iterate of $\cR$ fixes the multibreak parameter. Let us call $\cB_W^{K,\bar\beta}$ the corresponding $\cR^2$-invariant Banach subspace of $\cB_W^K$; so $\cPA_W^K=\cB_W^{K,0}$. 

\bibliographystyle{amsalpha}
\bibliography{biblio}
\end{document}